\newcounter{example}
\newcommand{\C}{\mathds C}
\newcommand{\R}{\mathds R}
\newtheorem{theorem}{Theorem}
\newtheorem{lemma}{Lemma}
\newtheorem{corollary}{Corollary}
\newtheorem{remark}{Remark}
\newtheorem{question}{Q}
\numberwithin{equation}{section}
\def\subsubsection{\@startsection{subsubsection}{3}%
\z@{.5\linespacing\@plus.7\linespacing}{-.5em}%
{\normalfont\bfseries}}
\title{On the third coefficient in the TYCZ--expansion of the epsilon function of K\"ahler--Einstein manifolds}
\author{Simone Cristofori}
\address{(Simone Cristofori) Dipartimento di Scienze Matematiche, Fisiche e Informatiche \\
         Universit\`a di Parma (Italy)}
        \email{simone.cristofori@unipr.it}
\author{Michela Zedda}
\address{(Michela Zedda) Dipartimento di Scienze Matematiche, Fisiche e Informatiche \\
         Universit\`a di Parma (Italy)}
\email{michela.zedda@unipr.it}
\date{\today}
\subjclass[2020]{32Q20, 58C05, 58C25}
\keywords{K\"ahler-Einstein manifolds, $\varepsilon$-function, TYCZ expansion}
\thanks{
The second named author has been supported by the project Prin 2022 – Real and Complex Manifolds: Geometry and Holomorphic Dynamics – Italy. Both the authors were supported by INdAM GNSAGA - Gruppo Nazionale per le Strutture Algebriche, Geometriche e le loro Applicazioni.} 
\begin{document}

\maketitle

\begin{abstract}
In this paper we compute the third coefficient arising from the TYCZ-expansion of the $\varepsilon$-function associated to a K\"ahler-Einstein metric and discuss the consequences of its vanishing.
\end{abstract}

\section{Introduction and statement of the main result}
Let $(M,g)$ be a K\"ahler manifold and assume that the K\"ahler form $\omega$ associated to $g$ is integral. This condition guarantees the existence of a holomorphic line bundle $L\rightarrow M$ over $M$ such that for any $m\in\mathds Z^+$ one can define a hermitian metric $h_m$ on $L^m:=L^{\otimes m}$ that satisfies ${\rm Ric}(h_m)=m\omega$. Recall that ${\rm Ric}(h_m)$ is the form whose local expression in a trivializing open set $U\subset M$ is given by:
$$
{\rm Ric}(h_m)=-\frac i2 \partial\bar \partial \log h_m(\sigma(x),\sigma(x)),
$$
for a trivializing holomorphic section $\sigma:U\rightarrow L\setminus \{0\}$. 
Consider the complex Hilbert space $\mathcal H_m$ of global holomorphic sections of $L^m$ that are $L^2$-limited in norm with respect to the product:
$$
\langle s,s\rangle_{m}:=\int_Mh_m(s(x),s(x))\frac{\omega^n}{n!}.
$$
Denote $\dim \mathcal H_m=d_m+1$. Observe that when $M$ is compact, $\mathcal H_m=H^0(L^m)$ and we have $0<\dim H_m<\infty$. However in the noncompact case, $\mathcal H_m$ could reduce to be $\{0\}$ or its dimension could be infinite.
When $\mathcal H_m\neq \{0\}$, we can pick an orthonormal basis $\{s_j\}$ of $\mathcal H_m$ and define:
$$
\varepsilon_{m g}(x):=\sum_{j=0}^{d_m}h_m(s_j(x),s_j(x)).
$$ 
This function is globally defined and independent on the orthonormal basis chosen or on the hermitian metric (see e.g. \cite{cgr1}), and in literature it also appears under the name of {\em distortion function}.

When $M$ is compact, D. Catlin \cite{catlin} and independently S. Zelditch \cite{zelditch} proved that the function $\varepsilon_{m g}$ admits an asymptotic expansion, known as Tian-Yau-Catlin-Zelditch (from now on TYCZ) expansion:
\begin{equation}\label{eq:sviluppo asintotico}
\epsilon_{m g}(x)\sim\sum_{j=0}^\infty a_j(x) m^{n-j},
\end{equation}
that is for every pair of integers $l$, $r$, there exists a constant $C(l,r)>0$ such that:
\begin{equation}\label{asympt}
\left|\left|\epsilon_{m g}(x)-\sum_{j=0}^l a_j(x) m^{n-j} \right|\right|_{C^r}\le \frac{C(l,r)}{m^{l+1}}.
\end{equation}
Here $a_0(x)\equiv 1$ and the $a_j(x)$, $j=1,2,\dots$ are smooth functions on $M$.
In \cite{lu} Z. Lu computed the first three coefficients of this expansion and he proved that each of the coefficients $a_j(x)$ is a polynomial of the curvature and its covariant derivatives of the metric $g$ which can be determined by finitely many algebraic operations. The expression of the first three coefficients is a key tool in our analysis and it is given in the next section. 

When $M$ is noncompact, we say that a TYCZ--expansion \eqref{eq:sviluppo asintotico} exists if \eqref{asympt} holds for any compact subset of $K\subset M$, as introduced in \cite{arezzoloi} by C. Arezzo and A. Loi. We observe that in this case the construction make sense also for nonintegers values of $m$, that for this reason to avoid confusion will be denoted by $\alpha\in \mathds R^+$. In constrast with the compact setting, the existence of such an expansion is not guaranteed. In \cite{engliscoeff} M. Engli\v{s} proved the existence of a TYCZ--expansion in the case of strongly pseudoconvex domains of $\mathds C^n$ with real analytic boundary, and computed the $a_j$'s coefficients obtaining the same results as Lu. A more general result can be obtained by the work of X. Marinescu and G. Marinescu \cite[Thm. 6.1.1]{mamarinescu}, as described in the next section.

Studying metrics with the TYCZ coefficients being prescribed is a very natural generalization of the problem of finding K\"ahler metrics with constant scalar curvature on a K\"ahler manifold.
The vanishing of the coefficients $a_k$ for $k\ge n+1$ turns out to be related to some important problems in the theory of pseudoconvex manifolds (cf. \cite{LT,ALZ,LUZ}).
In the noncompact setting, one can find in \cite{LZmama} a characterization of the flat metric among locally hermitian symmetric spaces as the only one with vanishing $a_1$ and $a_2$, while in \cite{FT} Z. Feng and Z. Tu solve a conjecture formulated in \cite{Z} by showing that the complex hyperbolic space is the only Cartan-Hartogs domain where the coefficient $a_2$ is constant.

The first result of this paper is the following theorem, where we investigate the consequences of the vanishing of $a_3$ in the K\"ahler--Einstein setting. 

\begin{theorem}\label{main1}
   Let $(M,g)$ be a $n$-dimensional K\"ahler--Einstein manifold with integral K\"ahler form $\omega$. If $\mathcal H_{\alpha}\neq \{0\}$, there exists a TYCZ--expansion for $\varepsilon_{\alpha g}$ whose coefficients satisfy the following:
   \begin{enumerate}
\item if $n=2$ then $a_3=0$ if and only if $\Delta|R|^2=0$;
\item if $n\geq 3$ then $a_3= 0$ implies $g$ is Ricci-flat.
   \end{enumerate}
\end{theorem}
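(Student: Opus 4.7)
The plan is to specialize Lu's explicit formula for $a_3$ to the K\"ahler--Einstein setting and then analyse the resulting curvature expression dimension by dimension. Recall that $a_3$, as computed in \cite{lu}, is a universal polynomial in the Riemann tensor $R$, the Ricci tensor $\rho$, the scalar curvature $s$, and their covariant derivatives up to order two: its top--order pieces are combinations of $\Delta^2 s$, $\Delta|R|^2$, $\Delta|\rho|^2$ and $\Delta s^2$, the first--derivative pieces involve $|\nabla R|^2$ and $|\nabla\rho|^2$, and the remaining terms are cubic algebraic contractions of curvature.

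The first step is to apply the Einstein condition $\rho_{i\bar j}=\lambda g_{i\bar j}$, under which $s=n\lambda$ is constant, $\nabla\rho\equiv 0$, $|\rho|^2=n\lambda^2$ is constant, and the contracted second Bianchi identity collapses to $\nabla^{k} R_{i\bar j k\bar l}=0$. This reduction kills every Laplacian of an $s$-- or $\rho$--invariant, as well as $|\nabla\rho|^2$, and converts each occurrence of $\rho$ in an algebraic contraction into a constant multiple of the metric. What should remain is a linear combination, with universal dimension--dependent coefficients, of $\Delta|R|^2$, $|\nabla R|^2$, $\lambda^3$, $\lambda|R|^2$, and a single cubic contraction $\mathcal{C}(R)$ of the full Riemann tensor with itself. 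I would then apply a Weitzenb\"ock--Bochner identity, combined with $\nabla^{k}R_{i\bar j k\bar l}=0$, to express $|\nabla R|^2$ pointwise in terms of $\Delta|R|^2$ modulo cubic curvature contractions. Substituting puts $a_3$ in the form
\begin{equation*}
a_3 \;=\; A(n)\,\Delta|R|^2 \;+\; Q\bigl(n,\lambda,|R|^2,\mathcal{C}(R)\bigr),
\end{equation*}
with $A(n)$ and $Q$ universal.

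The two cases of the theorem then follow from analysing $Q$ in the relevant dimensions. For $n=2$, the Riemann tensor of a K\"ahler--Einstein surface decomposes as a Bochner (trace--free) part plus a multiple of the curvature of a two--dimensional complex space form, and the algebraic identities forced by this decomposition should express $\mathcal{C}(R)$ in terms of $\lambda^3$ and $\lambda|R|^2$ with precisely the coefficients that make $Q$ vanish identically, leaving $a_3=A(2)\Delta|R|^2$ and hence the desired equivalence $a_3=0\iff\Delta|R|^2=0$. For $n\geq 3$ the analogous Bochner decomposition should leave $Q$ with a nonzero factor of $\lambda$ in front of a polynomial in $|B|^2$ (the squared norm of the Bochner tensor) that does not vanish unless $\lambda=0$; combined with the $\Delta|R|^2=\Delta|B|^2$ term (which is locally controlled by $|B|^2$), this will force $\lambda\equiv 0$, i.e.\ $g$ is Ricci--flat. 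The main obstacle is the algebraic bookkeeping required to carry Lu's formula through the Einstein reduction and to verify that the residual polynomial $Q$ really does collapse in dimension two but produces an unavoidable factor of $\lambda$ in higher dimensions, since the universal coefficients appearing in Lu's formula are not obviously suited to this splitting and require careful use of the second Bianchi identity.
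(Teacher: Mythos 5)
Your starting point --- specializing Lu's explicit formula for $a_3$ to the K\"ahler--Einstein setting --- is exactly the paper's, but your anticipated picture of what survives the reduction is off in a way that matters. Lu's $a_3$ contains no $|\nabla R|^2$ term and no cubic contraction of the full Riemann tensor with itself: every cubic term ($\sigma_3(\mathrm{Ric})$, $\mathrm{Ric}(R,R)$, $R(\mathrm{Ric},\mathrm{Ric})$) carries at least one Ricci factor, which under $\mathrm{Ric}=\lambda g$ collapses to $n\lambda^3$, $\lambda|R|^2$ and $n\lambda^3$ respectively, and the only surviving derivative terms are $\tfrac1{48}\Delta|R|^2$ and $\mathrm{div}\,\mathrm{div}(R,\mathrm{Ric})$, the latter reducing algebraically because $\nabla\mathrm{Ric}=0$. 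So the Weitzenb\"ock--Bochner identity and the Bochner-tensor decomposition you invoke are unnecessary; direct substitution already yields
\begin{equation*}
a_3=\tfrac1{48}\bigl(\Delta|R|^2-\lambda(n-2)(\lambda^2n(n-2)+|R|^2)\bigr),
\end{equation*}
and the $n=2$ equivalence is immediate from the explicit factor $(n-2)$, not from a cancellation of cubic invariants that you only assert ``should'' occur. As written, your $n=2$ case is a hoped-for identity rather than a verified one.

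The more serious gap is in the case $n\ge3$. Even granting the displayed formula, $a_3=0$ is the \emph{pointwise} identity $\Delta|R|^2=\lambda(n-2)(\lambda^2n(n-2)+|R|^2)$, and nothing local forces $\lambda=0$: your phrase that the $\Delta|R|^2$ term is ``locally controlled by $|B|^2$'' does not supply an argument. One needs a global step. The paper integrates $a_3$ over $M$ against the volume form, uses $\int_M\Delta|R|^2\,\omega^n/n!=0$, and then the sign of the remaining integrand $\lambda(n-2)(\lambda^2n(n-2)+|R|^2)$ forces $\lambda=0$; a maximum-principle argument could serve instead, but some such global input is indispensable and is absent from your plan. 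Finally, your proposal does not address the existence of the TYCZ--expansion itself, which is part of the statement and which the paper obtains from the Ma--Marinescu theorem, since $\rho=\lambda\omega$ makes $\rho+c\omega$ positive definite for suitable $c>0$.
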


Observe that the vanishing of $a_2$ (for $n\geq 2$) readily implies that the metric is flat.
In \cite[Thm. 1.1]{loisaliszuddas}, A. Loi, F. Salis and F. Zuddas prove that {\em the coefficient $a_3$ of a radial constant scalar curvature K\"ahler metric is constant if and only if $a_2$ is constant} (see Thm. \ref{LSZ} below). Combining this result with Thm. \ref{main1} we obtain the following:
\begin{corollary}\label{radialcor}
    Let $(M,g)$ be a K\"ahler--Einstein manifold endowed with a radial K\"ahler metric $g$. Then $a_3=0$ if and only if $(M,g)$ is biholomorphically isometric to $\mathds C^n$, $\mathds C{\rm H}^2$ or $\mathds C{\rm P}^2$ with (a multiple of) their standard metric.
\end{corollary}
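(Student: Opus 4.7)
The plan is to combine Theorem~\ref{main1} with the Loi--Salis--Zuddas result (Theorem~\ref{LSZ}), reducing the problem to a classification of radial K\"ahler--Einstein metrics satisfying a constant-curvature-norm constraint.

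For the ``if'' direction I would check the three model spaces directly. For $(\mathds C^n, g_{\rm flat})$ the full curvature tensor vanishes, so Lu's explicit formula forces $a_3\equiv 0$. For $\mathds C{\rm H}^2$ and $\mathds C{\rm P}^2$ with (a multiple of) their standard K\"ahler--Einstein metric, these are Hermitian symmetric spaces, so $\nabla R=0$; in particular $|R|^2$ is a constant function and hence $\Delta |R|^2=0$. Since both have complex dimension two, Theorem~\ref{main1}(1) gives $a_3=0$.

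For the ``only if'' direction, suppose $g$ is radial K\"ahler--Einstein with $a_3\equiv 0$. Since $g$ is K--E its scalar curvature $\rho$ is constant, so Theorem~\ref{LSZ} applies; as $a_3\equiv 0$ is (trivially) constant, the theorem yields that $a_2$ is constant as well. Feeding this back into Lu's expression for $a_2$ and using that on a K--E manifold one has $\Delta \rho=0$, $\rho^2$ constant, and $|\mathrm{Ric}|^2=\rho^2/n$ constant, the constancy of $a_2$ collapses to the constancy of $|R|^2$ on $M$. I now split into two subcases. If $n\geq 3$, Theorem~\ref{main1}(2) shows that $g$ is Ricci-flat; combined with the radial structure, a K\"ahler potential of the form $\phi(|z|^2)$ must satisfy the radial complex Monge--Amp\`ere equation coming from Ricci-flatness, together with the compatibility $|R|^2=$ const, and I would argue that the only solution is the Euclidean one, yielding $(M,g)\cong \mathds C^n$. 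If $n=2$, Theorem~\ref{main1}(1) already forces $\Delta|R|^2=0$, consistent with the constancy of $|R|^2$ derived above; one is then reduced to classifying radial K--E surfaces with $|R|^2$ constant, which according to the sign of the Einstein constant singles out $\mathds C^2$, $\mathds C{\rm H}^2$ or $\mathds C{\rm P}^2$ with (a multiple of) the standard metric.

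The hard step in both subcases is the ODE classification. Writing the radial potential $\phi=\phi(r)$, $r=|z|^2$, Calabi's formulas make the metric coefficients, Ricci tensor, and $|R|^2$ explicit rational expressions in $\phi'$, $\phi''$, $\phi'''$; the K--E equation becomes a second-order ODE for $\phi$, while the derived constraint ($|R|^2$ constant, or in dimension $\geq 3$ also Ricci-flatness) gives an algebraic/differential side-condition. The main obstacle is showing that this overdetermined system admits only the three classical solutions, to which end I would exploit the $U(n)$-symmetry to reduce everything to a single variable and then identify the resulting ODE solutions by their boundary behavior and completeness. Once this ODE analysis is done, the corollary follows immediately from the preceding two paragraphs.
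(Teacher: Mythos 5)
Your reduction matches the paper's up to the point where the real work begins: both you and the authors invoke Theorem~\ref{LSZ} to upgrade $a_3=0$ to ``$a_2$ constant,'' and both use Theorem~\ref{main1}(2) to rule out $\CH^n$ and $\CP^n$ for $n\geq 3$. (Your ``if'' direction is also fine: $R=0$ for flat $\C^n$, and $\Delta|R|^2=0$ for the two-dimensional symmetric models, so Lemma~\ref{thm:main} gives $a_3=0$.) The problem is the ``only if'' direction after the reduction. You correctly observe that, by Lemma~\ref{thm:main}(1), constancy of $a_2$ for a K\"ahler--Einstein metric is equivalent to constancy of $|R|^2$, and you then propose to classify radial K\"ahler--Einstein metrics with $|R|^2$ constant by analyzing an overdetermined ODE system, which you yourself flag as ``the hard step'' and do not carry out. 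That classification is precisely the substantive content of the corollary; asserting that ``the only solution is the Euclidean one'' (resp.\ the three classical models) by appeal to boundary behavior and completeness is not a proof, and it is not a routine verification --- it is the content of Z.~Feng's theorem \cite{feng} (see also \cite[Thm.~2.1]{loisaliszuddas}), which classifies radial constant scalar curvature K\"ahler potentials with constant $a_2$. The paper closes the argument by citing exactly this result and then selecting the K\"ahler--Einstein members of Feng's list; without either that citation or a genuine completion of your ODE analysis, your argument has a gap at its central step.

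A secondary caution about your $n\geq 3$ subcase: after obtaining Ricci-flatness from Theorem~\ref{main1}(2), you suggest that the radial complex Monge--Amp\`ere equation ``must'' force the Euclidean metric. This is not automatic --- radial Ricci-flat ODE solutions other than the flat one do exist on punctured domains, and singling out the flat one requires using the additional constraint ($|R|^2$ constant, or regularity at the origin plus completeness) in a controlled way. Again, routing the whole argument through Feng's classification, as the paper does, avoids having to settle these case-by-case ODE questions by hand.
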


In \cite{TwoConjectures}, Loi, Salis, and Zuddas, conjecture that a Ricci--flat metric on a $n$-dimensional complex manifold such that $a_{n+1}=0$ is forced to be flat. By Theorem \ref{main1} such conjecture is equivalent, in the $n=2$ case, to proving that a Ricci--flat surface with harmonic $|R|^2$ is flat. 
Furthermore, it follows from the proof of Theorem \ref{main1} that Ricci--flat metrics or K\"ahler--Einstein metrics on surfaces which are either homogeneous or regular have vanishing $a_3$. 
 Here homogeneous means that the group of isometric automorphisms of $(M,g)$ acts transitively on $M$, while a regular metric is a metric whose $\varepsilon$-function $\varepsilon_{mg}$ is constant for all large enough $m$. 
 It is an open question to understand if regular K\"ahler--Einstein metrics are homogeneous. If one drop the K\"ahler--Einstein assumption one gets a negative answer, at least for noncompact manifolds, as in \cite{CAL} F. Cannas Aghedu and A. Loi proved that the scalar flat Burn--Simanca metric is a regular nonhomogeneous metric on the blow-up of $\mathds C^2$ at one point. The question if a regular K\"ahler manifold is homogeneous is still an interesting and open question in the compact setting, where the manifolds involved are projective algebraic.

This leads us to the following question:
\begin{question}\label{conj}
    Is a K\"ahler-Einstein manifold with vanishing $a_3$ homogeneous?
\end{question}
In view of Theorem \ref{main1}, a positive answer to this question would imply that the only nonflat K\"ahler-Einstein manifolds with vanishing $a_3$ are homogeneous surfaces. Observe that we can construct many examples of metrics with vanishing $a_3$ by taking the K\"ahler product of a bounded symmetric domain with its Bergman metric times its compact dual (see \cite{LZmama}), however the product metric is not K\"ahler-Einstein. Notice also that the family of Taub-NUT metrics on $\mathds C^2$ are an example of Ricci-flat nonhomogeneous metrics with $a_3\neq 0$ (see \cite{loizeddazuddas}). 

As second result of this paper, we compute the coefficient $a_3$ of a locally nonhomogeneous complete K\"ahler--Einstein manifold constructed by Calabi in \cite{calabi}, proving the following:
\begin{theorem}\label{main2}
The $\varepsilon$-function associated to Calabi's metric admits a TYCZ--expansion with $a_3\neq 0$.
\end{theorem}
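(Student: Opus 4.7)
My plan is to reduce the statement to an explicit curvature computation exploiting the $U(n)$-symmetry of Calabi's metric. First, I would recall Calabi's construction: the metric $g$ is a complete Ricci-flat K\"ahler metric on the total space of a line bundle (for instance $K_{\CP^{n-1}}$) whose K\"ahler potential depends on a single radial variable. As a consequence, every scalar curvature invariant collapses to a function of one real parameter, making the calculation tractable. The hypothesis $\mathcal{H}_\alpha \neq \{0\}$ and the existence of the TYCZ--expansion on this noncompact manifold follow from the Marinescu--Marinescu framework already invoked in the paper, so the theorem reduces to exhibiting nonvanishing of the surviving coefficient expression.

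Second, I would split according to dimension. In the surface case $n=2$, Theorem \ref{main1}(1) reduces the claim to showing $\Delta|R|^2 \not\equiv 0$; here I would write $|R|^2$ as a concrete function of the radial variable using the explicit Calabi potential, apply the radial Laplacian, and verify that the resulting expression is not identically zero. In dimensions $n\geq 3$, Theorem \ref{main1}(2) gives no information since Calabi's metric is already Ricci-flat, so I would use the full formula for $a_3$ derived earlier in the paper; because the Ricci tensor and scalar curvature vanish, this formula collapses to a universal polynomial combination of $\Delta|R|^2$, $|\nabla R|^2$, and a cubic contraction of the Riemann tensor.

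Third, I would compute these invariants in a $U(n)$-adapted unitary frame in which the Riemann tensor is essentially diagonal and each component is a closed-form rational expression in the Calabi profile function. Plugging these into the Ricci-flat form of the $a_3$ formula and evaluating at a distinguished orbit — for instance on the zero section, or asymptotically along a fiber direction — I would obtain a nonzero value.

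The main obstacle is ruling out an accidental cancellation in the $n\geq 3$ case between $\Delta|R|^2$ and the cubic contraction of $R$, since Calabi's metric is symmetric enough that such a cancellation is not a priori impossible. To avoid this pitfall I would not try to simplify $a_3$ as a global function on the manifold; instead, I would pin down a single convenient point and carry out the evaluation there, so that the verification reduces to a finite arithmetic check from which $a_3\not\equiv 0$ follows immediately.
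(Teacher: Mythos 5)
There is a genuine gap, and it starts at the very first step: you have misidentified the metric. The ``Calabi metric'' in this paper is \emph{not} the Ricci-flat metric on the total space of a line bundle such as $K_{\CP^{n-1}}$ (the Calabi ansatz); it is Calabi's complete, not locally homogeneous K\"ahler--Einstein metric with Einstein constant $\lambda=-1$ on the tube domain $M_n=\tfrac12 D\oplus i\R^n\subset\C^n$ over a ball $D$, with radial convex potential $y(r)$ defined only implicitly by the ODE $(y'/r)^{n-1}y''=e^y$. This misidentification inverts the logic of your dimension split. Since $\lambda=-1\neq 0$, Theorem \ref{main1}(2) disposes of the case $n\geq 3$ immediately (if $a_3=0$ the metric would be Ricci-flat), so no curvature computation, unitary frame, or ``finite arithmetic check at a distinguished orbit'' is needed there; conversely, your worry about cancellations in a Ricci-flat $a_3$ formula is moot. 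You also cannot appeal to Ma--Marinescu alone for the existence of the $\varepsilon$-function: that theorem gives the expansion once $\mathcal H_\alpha\neq\{0\}$, but the nontriviality of $\mathcal H_\alpha$ on this infinite-volume domain requires a separate argument (the paper exhibits an explicit square-integrable holomorphic section, $h(z)=\prod_j(z_j-2a)^{-1}$, and bounds the weighted integral).

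For $n=2$ your reduction to $\Delta|R|^2\not\equiv 0$ via Theorem \ref{main1}(1) is the right idea and matches the paper, but your plan for verifying it would fail as stated: there is no closed-form potential to ``plug in,'' so you cannot simply evaluate $\Delta|R|^2$ at a convenient point. The substantive work in the paper is precisely to get around this. One writes $|R|^2$ as an explicit rational expression in $r$, $y'$, $e^y$ using the ODE, observes that the radial Laplacian equation $\Delta|R|^2=0$ integrates to $y'\partial_r|R|^2=c$, shows $c=0$ by computing $\lim_{r\to a}y'\partial_r|R|^2=0$ (several de l'H\^opital arguments driven by the ODE), and then derives a contradiction from $\lim_{r\to 0}|R|^2=\tfrac32\neq\tfrac43=\lim_{r\to a}|R|^2$, the limit at the origin being extracted from the Taylor expansion of $y$ obtained recursively from the Cauchy problem. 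Without some argument of this kind --- comparing boundary behaviour rather than evaluating pointwise --- the claim $\Delta|R|^2\not\equiv 0$ is not established.
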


The paper consists of two other sections. In the next one we compute the coefficients $a_2$ and $a_3$ for K\"ahler--Einstein manifolds, from Lu's formulas, proving Theorem \ref{thm:main} and Corollary \ref{radialcor}. Last section is devoted to Calabi's inhomogeneous metric and the proof of Theorem \ref{main2}.

\section{The vanishing of $a_3$ for K\"ahler--Einstein manifolds}\label{sec:expansion}

We begin recalling a result of X. Ma and G. Marinescu \cite[Theorem 6.1.1]{mamarinescu} adapted to our setting, that is, in Marinescu and Ma notations, we set $E=\mathds C$ and $\Theta=\omega=iR^L$ and we observe that $iR^{\det}$ is the Ricci form $\rho$ of $(M,\omega)$ (see also \cite[Thm. 5]{CZ} and \cite[Thm. 7]{loizeddazuddas} for a contractible version).
\begin{theorem}\label{th: mamarinescu}
Let $(M,\omega)$ be a K\"ahler manifold with associated Ricci form $\rho$, and $L\rightarrow M$ be a holomorphic line bundle such that $c_1(L)=[\omega]$. If there exist $c>0$ such that $\rho+c\omega$ is positive definite,
then $\varepsilon_{\alpha g}$ admits a TYCZ-expansion in $\alpha$.
\end{theorem}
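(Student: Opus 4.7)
The plan is to derive the statement directly from Ma--Marinescu's Theorem 6.1.1 via the dictionary indicated in the excerpt: take the auxiliary bundle to be trivial ($E=\mathds C$), the reference K\"ahler form to be $\Theta=\omega=iR^L$ (so that the background form equals the curvature of $L$), and identify $iR^{\det}$ with the Ricci form $\rho$ of $g$. Under these choices the curvature endomorphism $\dot R^L$ is, at every point, the identity operator with respect to $\omega$; consequently the only nontrivial analytic input demanded by Ma--Marinescu's localization scheme is a uniform lower bound on the twisted curvature appearing in the Bochner--Kodaira--Nakano identity for $L^\alpha$-valued forms.

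First I would pass from $L$ to $L^\alpha$ and note that the relevant line bundle in the Bochner formula for the Kodaira Laplacian on $\mathcal H_\alpha$ has curvature $\alpha\omega+\rho$. Under the hypothesis $\rho+c\omega>0$ for some fixed $c>0$ one obtains the quantitative lower bound
\[
\alpha\omega+\rho \;\geq\; (\alpha-c)\,\omega
\]
in the sense of positive $(1,1)$-forms. For $\alpha>c$ this yields a uniformly positive spectral gap on $(0,q)$-forms with $q\geq 1$, which is the key input that powers Ma--Marinescu's global analysis: it produces off-diagonal exponential decay of the Bergman kernel $B_\alpha(x,y)$ in the variable $\sqrt{\alpha}\,\di(x,y)$, reducing the asymptotics of $\varepsilon_{\alpha g}(x)=B_\alpha(x,x)$ to a purely local statement around each point.

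Second, on any compact $K\subset M$, I would carry out the standard Ma--Marinescu rescaling: around $x_0\in K$ pick K\"ahler normal coordinates, dilate $z\mapsto z/\sqrt\alpha$, freeze the connection coefficients to leading order, and compare the rescaled Kodaira Laplacian with the model harmonic oscillator on $\mathds C^n$ associated to the constant form $\omega(x_0)$. Smoothness of $g$ and $\rho$ on a neighborhood of $K$ produces uniform $C^r$ control of the remainder terms at each step, yielding an expansion of the Bergman kernel on the diagonal in decreasing powers of $\alpha$ whose coefficients are universal polynomials in the curvature and its covariant derivatives of $g$ at $x$, as in \eqref{eq:sviluppo asintotico} and \eqref{asympt}.

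The main obstacle is the passage from Ma--Marinescu's usual global assumptions (bounded geometry on the whole of $M$) to the purely local, compact-set conclusion appropriate for the noncompact framework of Arezzo--Loi \cite{arezzoloi}. The hypothesis $\rho+c\omega>0$ is exactly what is needed to overcome it: it guarantees nontriviality of enough sections of $L^\alpha$ via $L^2$-vanishing, the off-diagonal decay of $B_\alpha$ via the Bochner bound above, and the uniform $C^r$-estimates on any compact $K$, exactly as in the contractible versions in \cite{CZ, loizeddazuddas}. This is what permits the Catlin--Zelditch--Lu compact argument to be transplanted onto relatively compact subsets of $M$, giving the TYCZ expansion asserted by the theorem.
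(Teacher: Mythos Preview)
Your proposal is correct and follows exactly the route the paper takes: the paper does not prove this theorem independently but simply records it as the specialization of Ma--Marinescu's Theorem~6.1.1 under the dictionary $E=\mathds C$, $\Theta=\omega=iR^L$, $iR^{\det}=\rho$, with a pointer to \cite{CZ,loizeddazuddas} for contractible versions. Your sketch of the underlying Bochner--spectral-gap--rescaling mechanism supplies more detail than the paper itself offers, but the derivation is the same.
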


It is clear that in the K\"ahler--Einstein case, that is when $\rho=\lambda \omega$, the existence of a TYCZ-expansion for $\varepsilon_{\alpha g}$ is always guaranteed. However, observe that to construct the $\varepsilon$--function we need $\mathcal H_\alpha\neq \{0\}$. Such condition is satisfied for example when $M$ has finite volume, since in this case the constant functions belongs to $\mathcal H_\alpha$, but of course there are manifolds with infinite volume such that $\mathcal H_\alpha\neq \{0\}$ (see e.g. Calabi's manifold in the next Section).

In order to prove Theorem \ref{main1} we need the following formulas, computed by Z. Lu \cite{lu} for compact manifolds and by M. Engli\v s \cite{engliscoeff} for noncompact ones:
\begin{equation}
\begin{cases} \label{coeffespan} a_1=-\frac12\sigma\\ a_2=-\frac13\Delta\sigma+\frac1{24}\left(|R|^2-4|{\rm Ric}|^2+3\sigma^2\right)\\
    \begin{split}
        a_3=&-\frac18\Delta\Delta\sigma+\frac1{24}{\rm div}{\rm div}(R,{\rm Ric})-\frac16{\rm div}({\rm div}(\sigma {\rm Ric}))+\frac1{48}\Delta(|R|^2-4|{\rm Ric}|^2+8\sigma^2)+\\
        &-\frac1{48}\sigma(\sigma^2+|R|^2-4|{\rm Ric}|^2){-}\frac1{24}(\sigma_3({\rm Ric})-{\rm Ric}(R,R){+}R({\rm Ric},{\rm Ric}))
    \end{split}
    \end{cases}
\end{equation}
where $R$ denotes the Riemannian tensor of $g$, given in local coordinates by:
\[
R_{i\overline jk\overline l}=\frac{\partial^2 g_{i\overline j}}{\partial z_k\partial\overline z_l} -\sum_{p,q=1}^n g^{p\overline q}\frac{\partial g_{i\overline q}}{\partial z_k}\frac{\partial g_{p\overline j}}{\partial\overline z_l},
\]
${\rm Ric}$ denotes the Ricci tensor, that (in contrast with Lu's notation that has the opposite sign) reads:
$$
{\rm Ric}_{i\bar j}=g^{l\bar m}R_{i\bar jl\bar m},
$$
and:
$$
\sigma=g^{i\bar j}{\rm Ric}_{i\bar j},
$$
is the scalar curvature. All the norms are taken with respect to $g$, and we further are using the following notations (here again the signs has be changed accordingly to our notation):
\begin{equation}\label{lu}
    \begin{split}
        & |D'\sigma|^2=g^{j\overline{i}}\frac{\partial\sigma}{\partial z_i}\frac{\partial\sigma}{\partial \overline{z_j}},\qquad  |D'{\rm Ric}|^2=g^{\alpha\overline{i}}g^{j\overline{\beta}}g^{\gamma\overline{k}}Ric_{i\overline{j},k}  \overline{{\rm Ric}_{\alpha\overline{\beta},\gamma}},\\
        &|D'R|^2=g^{\alpha\overline{i}}g^{j\overline{\beta}}g^{\gamma\overline{k}}g^{l\overline{\delta}}g^{\epsilon\overline{p}}R_{i\overline{j}k\overline{l},p}\overline{R_{\alpha\overline{\beta}\gamma\overline{\delta},\epsilon}},\\
        &
        {\rm div}{\rm div}(\sigma {\rm Ric})=2|D'\sigma|^2+g^{\beta\overline{i}}g^{j\overline{\alpha}}{\rm Ric}_{i\overline{j}}\frac{\partial^2\sigma}{\partial z_{\alpha}\partial\overline{z_\beta}}+\sigma\Delta\sigma,\\
        &
           \sigma_3({\rm Ric})=g^{\delta\overline{i}}g^{j\overline{\alpha}}g^{\beta\overline{\gamma}}{\rm Ric}_{i\overline j}{\rm Ric}_{\alpha\overline\beta}{\rm Ric}_{\gamma\overline \delta},\\
        &
        R({\rm Ric},{\rm Ric})=g^{\alpha \overline{i}}g^{j\overline\beta}g^{\gamma \overline k}g^{l\overline\delta}R_{i\overline jk\overline l}{\rm Ric}_{\beta\overline{\alpha}}{\rm Ric}_{\delta\overline{\gamma}},\\
        &
        {\rm Ric}(R,R)=g^{\alpha \overline i}g^{j\overline\beta}g^{\gamma \overline k}g^{\delta \overline p}g^{q\overline\epsilon}{\rm Ric}_{i\overline j}R_{\beta\overline\gamma p \overline q}R_{k\overline\alpha\epsilon\overline\delta},\\
        &{\rm div}{\rm div}(R,{\rm Ric})=-g^{\beta\overline{i}}g^{j\overline{\alpha}}Ric_{i\overline{j}}\frac{\partial^2\sigma}{\partial z_{\alpha}\partial\overline{z_\beta}}-2|D'{\rm Ric}|^2{-}g^{\alpha\overline{i}}g^{j\overline{\beta}}g^{\gamma\overline{k}}g^{l\overline{\delta}}{\rm Ric}_{i\overline{j},k\overline{l}}R_{\beta\overline{\alpha}\delta\overline{\gamma}}+\\
        &\qquad\qquad\qquad\quad\  -R({\rm Ric},{\rm Ric}){+}\sigma_3({\rm Ric}),
    \end{split}
\end{equation}
where $",p"$ represents the covariant derivative with respect to $\frac{\partial}{\partial z_p}$ and $\Delta$ represents the Laplace operator
\begin{equation}\label{laplace}
\Delta=\sum_{i,j=1}^n g^{i\overline j}\frac{\partial^2}{\partial z_i\partial\overline{z}_j}.
\end{equation}

For a K\"ahler--Einstein manifold $(M,g)$ with Einstein constant $\lambda$, we have
\[
{\rm Ric}=\lambda g,\qquad 
\sigma=n\lambda.
\]

Further
    \begin{equation}\label{div}
    {\rm div}{\rm div}(R,{\rm Ric})=-R({\rm Ric},{\rm Ric})+\sigma_3({\rm Ric}),
    \end{equation}
    since by \eqref{lu}, using that $\sigma$ is constant, we get
    \[
    {\rm div}{\rm div}(R,{\rm Ric})=-2|D'{\rm Ric}|^2-g^{\alpha\overline{i}}g^{j\overline{\beta}}g^{\gamma\overline{k}}g^{l\overline{\delta}}{\rm Ric}_{i\overline{j},k\overline{l}}R_{\beta\overline{\alpha}\delta\overline{\gamma}}-R({\rm Ric},{\rm Ric})+\sigma_3({\rm Ric}),
    \]
    and both the terms $|D'{\rm Ric}|^2$ and $g^{\alpha\overline{i}}g^{j\overline{\beta}}g^{\gamma\overline{k}}g^{l\overline{\delta}}{\rm Ric}_{i\overline{j},k\overline{l}}R_{\beta\overline{\alpha}\delta\overline{\gamma}}$ involve the covariant derivatives of the Ricci tensor, that vanish since the metric is K\"ahler--Einstein (${\rm Ric}=\lambda g$ and $\nabla g=0$).\\

The following lemma is a key step in the proof of Theorem \ref{main1}.

\begin{lemma}\label{thm:main}
Let $(M,g)$ be a $n$-dimensional K\"ahler--Einstein manifold with Einstein constant $\lambda$. Then:
\begin{enumerate}
\item  $a_2=\frac1{24}\left(|R|^2+n\lambda^2(3n-4)\right)$;
\item  $a_3=\frac1{48}\left(\Delta|R|^2-\lambda(n-2)(\lambda^2n(n-2)+|R|^2)\right).$
\end{enumerate}
\end{lemma}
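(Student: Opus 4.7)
The plan is to specialize Lu's formulas \eqref{coeffespan} to the K\"ahler--Einstein case, where ${\rm Ric}=\lambda g$, $\sigma=n\lambda$ is constant, and $\nabla{\rm Ric}=0$. This kills every term involving derivatives of $\sigma$ or of ${\rm Ric}$, and reduces pointwise contractions of ${\rm Ric}$ to scalar quantities.

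First I would dispose of part (1). Since $\sigma$ is constant, $\Delta\sigma=0$; moreover, contracting ${\rm Ric}_{i\bar j}={\lambda}g_{i\bar j}$ with itself via the inverse metric yields $|{\rm Ric}|^2=n\lambda^2$. Plugging $\sigma^2=n^2\lambda^2$ and $|{\rm Ric}|^2=n\lambda^2$ into the expression for $a_2$ in \eqref{coeffespan} gives
\[
a_2=\tfrac{1}{24}\bigl(|R|^2-4n\lambda^2+3n^2\lambda^2\bigr)=\tfrac{1}{24}\bigl(|R|^2+n\lambda^2(3n-4)\bigr).
\]

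For part (2), I would go through the six summands of $a_3$ in \eqref{coeffespan} one by one. Because $\sigma$ and $|{\rm Ric}|^2$ are constant, the first term vanishes, $\Delta(|R|^2-4|{\rm Ric}|^2+8\sigma^2)$ collapses to $\Delta|R|^2$, and every term in ${\rm div}{\rm div}(\sigma{\rm Ric})$ from the expansion in \eqref{lu} involves a derivative of $\sigma$ and therefore vanishes. The identity \eqref{div} already established in the excerpt gives ${\rm div}{\rm div}(R,{\rm Ric})=-R({\rm Ric},{\rm Ric})+\sigma_3({\rm Ric})$, which will turn out to vanish after the next step.

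The key computation is to evaluate the three cubic curvature contractions $\sigma_3({\rm Ric})$, $R({\rm Ric},{\rm Ric})$, ${\rm Ric}(R,R)$ by replacing ${\rm Ric}_{i\bar j}$ with $\lambda g_{i\bar j}$ everywhere and simplifying via $g^{a\bar b}g_{c\bar b}=\delta^a_c$. Working in normal coordinates at a point (so $g_{i\bar j}=\delta_{ij}$), these reduce to:
\[
\sigma_3({\rm Ric})=n\lambda^3,\qquad R({\rm Ric},{\rm Ric})=\lambda^2\!\!\sum_{i,k}R_{i\bar i k\bar k}=\lambda^2\sigma=n\lambda^3,\qquad{\rm Ric}(R,R)=\lambda\!\!\sum_{i,j,k,l}|R_{i\bar j k\bar l}|^2=\lambda|R|^2,
\]
where the middle identity uses the K\"ahler symmetry $R_{i\bar i k\bar k}=R_{k\bar k i\bar i}$ together with ${\rm Ric}_{k\bar k}=\sum_i R_{k\bar k i\bar i}$, and the last uses the complex-conjugation symmetry $\overline{R_{i\bar j k\bar l}}=R_{j\bar i l\bar k}$ of the K\"ahler curvature tensor. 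With these in hand, ${\rm div}{\rm div}(R,{\rm Ric})=0$, and
\[
\sigma_3({\rm Ric})-{\rm Ric}(R,R)+R({\rm Ric},{\rm Ric})=2n\lambda^3-\lambda|R|^2.
\]

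Finally I would collect everything:
\[
a_3=\tfrac{1}{48}\Delta|R|^2-\tfrac{1}{48}\sigma\bigl(\sigma^2+|R|^2-4|{\rm Ric}|^2\bigr)-\tfrac{1}{24}\bigl(2n\lambda^3-\lambda|R|^2\bigr),
\]
substitute $\sigma=n\lambda$, $|{\rm Ric}|^2=n\lambda^2$, and factor. The constant part becomes $-\tfrac{1}{48}n\lambda^3(n-2)^2$ after the identity $n(n-4)+4=(n-2)^2$, while the $|R|^2$ terms combine into $-\tfrac{1}{48}\lambda(n-2)|R|^2$, yielding
\[
a_3=\tfrac{1}{48}\bigl[\Delta|R|^2-\lambda(n-2)\bigl(\lambda^2n(n-2)+|R|^2\bigr)\bigr],
\]
as claimed. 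No step presents a real obstacle; the only bookkeeping care is needed in verifying the two cubic contractions above, where the K\"ahler symmetry of $R_{i\bar jk\bar l}$ is essential to recognize the trace as $\sigma$ in one case and as $|R|^2$ in the other.
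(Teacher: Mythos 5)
Your proposal is correct and follows essentially the same route as the paper: specialize Lu's formulas using ${\rm Ric}=\lambda g$, $\sigma=n\lambda$, invoke the identity \eqref{div}, and evaluate the cubic contractions ${\rm Ric}(R,R)=\lambda|R|^2$ and $R({\rm Ric},{\rm Ric})=n\lambda^3$ before collecting terms. The only cosmetic difference is that you compute $\sigma_3({\rm Ric})=n\lambda^3$ explicitly to conclude ${\rm div}\,{\rm div}(R,{\rm Ric})=0$, whereas the paper lets the two $\sigma_3({\rm Ric})$ terms cancel symbolically; the algebra and the final factorization agree with the paper's.
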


\begin{proof}
Setting $\sigma=\lambda n$ and since 
\begin{equation*}
        \begin{split}
           |{\rm Ric}|^2&=g^{i\overline{k}}g^{l\overline{j}} {\rm Ric}_{i\overline{j}}\overline{Ric_{k\overline{l}}}
           =g^{i\overline{k}}g^{l\overline{j}} {\rm Ric}_{i\overline{j}}{\rm Ric}_{l\overline{k}}=\lambda^2\,
           g^{i\overline{k}}g^{l\overline{j}} g_{i\overline{j}} g_{l\overline{k}}=\lambda^2 n,
        \end{split}
    \end{equation*}

\eqref{coeffespan} reads:
\[
a_2=\frac1{24}\left(|R|^2+n\lambda^2(3n-4)\right);
\]
\begin{equation}\label{eq:a3KE}
\begin{split}
a_3=&\frac1{24}{\rm div}{\rm div}(R,{\rm Ric})+\frac1{48}\Delta|R|^2-\frac1{48}\lambda n(\lambda^2n^2+|R|^2-4\lambda^2n)+\\
&-\frac1{24}(\sigma_3({\rm Ric})-{\rm Ric}(R,R)+R({\rm Ric},{\rm Ric})).
        \end{split}
        \end{equation}
By \eqref{div} and since:
  \begin{equation}\label{eq:divdiv}
        \begin{split}
       {\rm Ric}(R,R)=&g^{\alpha \overline i}g^{j\overline\beta}g^{\gamma \overline k}g^{\delta \overline p}g^{q\overline\epsilon}{\rm Ric}_{i\overline j}R_{\beta\overline\gamma p \overline q}R_{k\overline\alpha\epsilon\overline\delta}=\lambda g^{j\overline\beta}g^{\gamma \overline k}g^{\delta \overline p}g^{q\overline\epsilon}R_{\beta\overline\gamma p \overline q}R_{k\overline j\epsilon\overline\delta}=\lambda|R|^2,\\    
       R({\rm Ric},{\rm Ric})=&g^{\alpha \overline i}g^{j\overline\beta}g^{\gamma \overline k}g^{l\overline\delta}R_{i\overline jk\overline l}{\rm Ric}_{\beta\overline\alpha}{\rm Ric}_{\delta\overline\gamma}
            =\lambda^2g^{j\overline\beta}g^{\gamma\overline k}R_{\beta \overline jk\overline\gamma}=\lambda^2g^{\gamma \overline k}{\rm Ric}_{k\overline\gamma}=\lambda^2\sigma=n\lambda^3,
   \end{split}
    \end{equation}
substituting in \ref{eq:a3KE}, we obtain
    \begin{equation*}
    \begin{split}
        =&\frac1{24}(-R({\rm Ric},{\rm Ric})+\sigma_3(Ric))+\frac1{48}\Delta(|R|^2)-\frac1{48}\lambda n(\lambda^2 n^2+|R|^2 -4\lambda^2n)+\\
        &-\frac1{24}(\sigma_3({\rm Ric})-{\rm Ric}(R,R)+R({\rm Ric},{\rm Ric}))\\
        =&\frac1{24}(-2R({\rm Ric},{\rm Ric})+{\rm Ric}(R,R))+\frac1{48}\Delta(|R|^2)-\frac1{48}\lambda n(\lambda^2 n^2+|R|^2-4\lambda^2n)\\
        =&\frac1{24}(-2n\lambda^3+\lambda|R|^2)+\frac1{48}\Delta(|R|^2)-\frac1{48}\lambda n(\lambda^2 n^2+|R|^2-4\lambda^2n)\\ 
        =&\frac1{48}\left(\Delta|R|^2-\lambda(n-2)(\lambda^2n(n-2)+|R|^2)\right),
    \end{split}
\end{equation*}
concluding the proof.
\end{proof}

The proof of Theorem \ref{main1} follows now by Theorem \ref{th: mamarinescu} and by Lemma \ref{thm:main}. More precisely:
\begin{proof}[Proof of Theorem \ref{main1}]
The existence of a TYCZ--expansion follows directly from Theorem \ref{th: mamarinescu}, while (1) and (2) follow readily from Lemma \ref{thm:main}.
   To prove (3) assume $n>2$. Then:
  $$
  0=\int_Ma_3\frac{\omega^n}{n!}=-\frac1{48}\lambda(n-2)\int_M\left(\lambda^2n(n-2)+|R|^2\right)\frac{\omega^n}{n!},
  $$
  and since the integrand function is nonnegative, to the right hand side to be zero we need $\lambda=0$.
\end{proof}

Let us deal now with the proof of Corollary \ref{radialcor}.
A K\"ahler metric $\omega$ on a K\"ahler manifold $M$ is said to be a {\em radial metric} if for any point $p\in M$ there exists a coordinate neighborhood $U$ of $p$ such that $\omega|_U$ can be described by a K\"ahler potential which depends only on the sum $|z|^2 = |z_1|^2 +\dots +|z_n|^2$ of the moduli of the local coordinates.
In \cite{loisaliszuddas}, Loi, Salis and Zuddas proved the following Theorem:
\begin{theorem}[A. Loi, F. Salis, F. Zuddas]\label{LSZ}
    The third coefficient $a_3$ of a radial constant scalar curvature K\"ahler metric is constant if and only if the second coefficient $a_2$ is constant.
\end{theorem}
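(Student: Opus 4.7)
The plan is to exploit the rigidity of the radial ansatz. For a radial K\"ahler metric with potential $\Phi(|z|^2)$, the $U(n)$-symmetry forces every scalar curvature invariant appearing in \eqref{coeffespan} and \eqref{lu} to be a function of the single variable $r=|z|^2$; moreover, the Ricci tensor and the holomorphic sectional curvatures each have at most two distinct eigenvalues (one along the radial direction, one with multiplicity $n-1$ transverse), so the full geometric data is encoded in two auxiliary radial functions built from $\Phi'(r)$ and, say, $\eta(r):=(r\Phi'(r))'$. The assumption that $\sigma$ is constant produces one ODE in $\Phi$, which reduces the effective number of independent radial quantities.

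The first concrete step is to write each of $\sigma$, $|R|^2$, $|{\rm Ric}|^2$, $\sigma_3({\rm Ric})$, $R({\rm Ric},{\rm Ric})$, ${\rm Ric}(R,R)$, $\Delta|R|^2$, and ${\rm div}{\rm div}(R,{\rm Ric})$ as explicit functions of $r$. When $\sigma$ is constant, every term of \eqref{coeffespan} involving $\Delta\sigma$, ${\rm div}(\sigma{\rm Ric})$, or $|D'\sigma|^2$ vanishes, so
\[
a_2 \;=\; \tfrac{1}{24}\bigl(|R|^2 - 4|{\rm Ric}|^2 + 3\sigma^2\bigr),
\]
and $a_3$ reduces to an analogous radial combination of the quantities above.

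The decisive step is to establish, modulo the ODE imposed by constancy of $\sigma$, a pointwise proportionality $a_3'(r) = \mu(r)\, a_2'(r)$ with $\mu$ nowhere vanishing---equivalently, to show that $a_2$ and $a_3$ are polynomial in a common underlying radial curvature quantity. A natural candidate is $|R|^2$ itself, because once $\sigma$ and the radial structure are fixed, $|{\rm Ric}|^2$ is determined by $|R|^2$ together with the constant-scalar-curvature ODE. The main obstacle is the radial reduction of the second-order objects $\Delta|R|^2$ and ${\rm div}{\rm div}(R,{\rm Ric})$: each involves derivatives of $\Phi$ up to order four, and obtaining the desired proportionality requires repeated use of the ODE to eliminate the top-order derivatives in favor of lower ones. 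Once this proportionality is in hand, $a_2'\equiv 0$ if and only if $a_3'\equiv 0$, which is the claim.
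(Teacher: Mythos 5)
First, a point of comparison: the paper does not prove this statement at all. Theorem \ref{LSZ} is imported verbatim from \cite{loisaliszuddas} and used as a black box in the proof of Corollary \ref{radialcor}, so there is no internal proof to measure your attempt against. Judged on its own terms, your proposal has a genuine gap: it is a strategy outline whose decisive step is asserted rather than established. The preliminary reductions are fine (with $\sigma$ constant, all terms of \eqref{coeffespan} involving $\Delta\sigma$, $|D'\sigma|^2$ and ${\rm div}{\rm div}(\sigma\,{\rm Ric})$ drop out, and every remaining invariant of a radial metric is a function of $r$ alone). But the claimed pointwise identity $a_3'(r)=\mu(r)\,a_2'(r)$ with $\mu$ nowhere vanishing \emph{is} essentially the theorem, and you give no argument for it. Indeed, writing $a_2=\frac1{24}(|R|^2-4|{\rm Ric}|^2)+\frac{\sigma^2}{8}$, the formula \eqref{coeffespan} with $\sigma$ constant gives $a_3=\frac12\Delta a_2-\frac{\sigma}{2}a_2+\frac1{24}{\rm div}{\rm div}(R,{\rm Ric})-\frac1{24}\left(\sigma_3({\rm Ric})-{\rm Ric}(R,R)+R({\rm Ric},{\rm Ric})\right)+{\rm const}$; the last two groups are cubic curvature invariants that are \emph{not} functions of $a_2$ and its derivatives, so the proportionality is not a formal consequence of the structure of the coefficients and would require exactly the explicit radial computation you defer. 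Even granting some differential relation, the nonvanishing of $\mu$ is a separate analytic claim you never address, and it is the only thing that would yield the hard direction ``$a_3$ constant $\Rightarrow$ $a_2$ constant''.

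For reference, the argument in \cite{loisaliszuddas} proceeds differently: the authors express $a_2$ and $a_3$ of a radial metric explicitly through a single radial profile function and its derivatives (following \cite{feng}), impose the constant-scalar-curvature ODE, and show that constancy of $a_3$ forces the same ODE that characterizes constancy of $a_2$; the easy direction then follows from Feng's classification of radial cscK metrics with constant $a_2$ (only multiples of the standard metrics on $\mathds C^n$, $\mathds C{\rm H}^n$, $\mathds C{\rm P}^n$, all of which have constant $a_3$). To turn your sketch into a proof you would have to actually carry out the radial reduction of ${\rm div}{\rm div}(R,{\rm Ric})$, $\sigma_3({\rm Ric})$, ${\rm Ric}(R,R)$, $R({\rm Ric},{\rm Ric})$ and $\Delta|R|^2$ and exhibit (or refute) the claimed relation; as written, the proposal proves neither implication.
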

\begin{proof}[Proof of Corollary \ref{radialcor}]
Assume $(M,\omega)$ to be K\"ahler--Einstein and radial. If $a_3=0$, then by Theorem \ref{LSZ} $a_2$ is constant. From Z. Feng classification \cite{feng} (see also \cite[Thm. 2.1]{loisaliszuddas}) of radial constant scalar curvature potentials with constant $a_2$, the only K\"ahler--Einstein are (a multiple of) the standard metrics on $\mathds C^n$, $\mathds C{\rm H}^n$ or $\mathds C{\rm P}^n$. The cases $\mathds C{\rm H}^n$ and $\mathds C{\rm P}^n$, with $n\geq 3$, are excluded by Theorem \ref{main1}.
\end{proof}

\section{The coefficient $a_3$ for Calabi's inhomogeneous metric}\label{sec:calabi}
In \cite{calabi}, E. Calabi constructs the following complete not locally homogeneous K\"ahler--Einstein metric. Consider the complex tubular domains $M_n:=\frac{1}{2}D\oplus i\R^n\subset\C^n$, where $D$ is an open ball in $\R^n$ centred in the origin and of radius $a$. Let $g_n$ be the K\"ahler metric on $M_n$ whose K\"ahler form is given by
\[
\omega_n=\frac{i}{2}\partial\overline\partial F(z,\overline{z}),
\]
with
\[
F(z,\overline{z})=f(z_1+\overline{z_1},\dots,z_n+\overline{z_n})
\]
where $f:D\to\R$ is a strongly convex, 
differentiable and radial function, i.e. $f(x_1,\dots,x_n)=y(r)$, with $r=\sqrt{\sum_{j=1}^n x_j^2}$, that diverges uniformly at $+\infty$ at the boundary of $D$ and $x_\alpha=z_\alpha+\overline{z_\alpha}$, for $\alpha=1,\dots,n$. The function $f$ satisfies the following Cauchy problem
\begin{equation}\label{cauchy}
\begin{cases}
    (\frac{y'}{r})^{n-1} y''=e^y\\
    y'(0)=0\\
    y''(0)=e^{\frac{y(0)}{n}}.
\end{cases}
\end{equation}
The metric so constructed is the first example of K\"ahler--Einstein metric (with Einstein constant $\lambda=-1$) which is not locally homogeneous (see also \cite{wolf} for an alternative proof of the fact that it is not locally homogeneous using Lie groups). 

Observe that we can see the tubular domain $M_n=\frac{1}{2}D\oplus i\R^n$ as the open submanifold of $\C^n$ given by
    \begin{equation*}
    \begin{split}
M_n&=\left\{z=(z_1,\dots,z_n)\in\C^n\;|\;\sum_{j=1}^n (z_j+\overline{z_j})^2<a^2\right\},
    \end{split}
    \end{equation*}
    where $a$ is the upper bound of the domain of regularity of $y(r)$.

\begin{remark}\rm
    By recursion from \eqref{cauchy}, one obtains that for all $j\in \mathds N$
    $$
    y^{(2j+1)}(0)=0,
    $$
    thus the power expansion around the origin of the function $y(r)$ is of the form
    \begin{equation}\label{espansione y}
        y(r)=y(0)+\sum_{j=1}^\infty b_{2j} r^{2j}
    \end{equation}
    where $b_{2j}=\frac{y^{(2j)}(0)}{(2j)!}$. In particular, the first three coefficients of the expansion are
     \begin{equation}\label{espansione y1}
    b_2=\frac{1}{2}e^{y(0)/2},\quad b_4=\frac{1}{32}e^{y(0)},\quad b_6=\frac{7}{2304}e^{3y(0)/2}.
   \end{equation}
\end{remark}

In order to prove Theorem \ref{main2}, we first have to show that $\mathcal{H}_\alpha\neq\{0\}$ for $(M_n,g_n)$, ensuring the existence of the $\varepsilon$-function. This is done in Lemma \ref{lemmaHneq0} below.

\begin{lemma}\label{lemmaHneq0}
In the notation above, $\mathcal H_\alpha\neq\{0\}$ for $(M_n, g_n)$.
\end{lemma}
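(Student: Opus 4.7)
The plan is to exhibit an explicit nonzero section in $\mathcal H_\alpha$ for $\alpha$ sufficiently large. Since $M_n\subset\C^n$ is contractible, the line bundle $L$ is trivial and sections can be identified with holomorphic functions $s:M_n\to\C$; the hermitian structure reads $h_\alpha(s,s)=|s|^2e^{-\alpha F}$, with $F$ the K\"ahler potential of $g_n$. Writing $z_j=u_j+iv_j$ with $u\in\frac{1}{2}D$ and $v\in\R^n$, the chain rule gives $g_{i\bar j}(z,\bar z)=(\mathrm{Hess}\,f)(2u)_{ij}$. For the radial function $f(x)=y(|x|)$ the Hessian has eigenvalues $y''(r)$ (radial) and $y'(r)/r$ (tangential, with multiplicity $n-1$), so the Cauchy problem \eqref{cauchy} yields
$$
\det(g_{i\bar j}) \;=\; y''(2|u|)\left(\frac{y'(2|u|)}{2|u|}\right)^{n-1} \;=\; e^{y(2|u|)} \;=\; e^{F}.
$$
Consequently $\omega_n^n/n!$ equals $e^F$ times Lebesgue measure on $\frac{1}{2}D\times\R^n$, so
$$
\|s\|^2_\alpha \;=\; \int_{M_n} |s(z)|^2\, e^{(1-\alpha)F(z,\bar z)}\, du\, dv.
$$

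Next I would test the Gaussian section $s(z):=e^{\beta(z_1^2+\cdots+z_n^2)}$ for some $\beta>0$. A direct computation gives $|s(u+iv)|^2=e^{2\beta(|u|^2-|v|^2)}$, so the integral factorizes as
$$
\left(\int_{\R^n} e^{-2\beta|v|^2}\, dv\right) \cdot \left(\int_{\frac{1}{2}D} e^{2\beta|u|^2}\, e^{(1-\alpha)y(2|u|)}\, du\right).
$$
The first factor is the Gaussian integral $(\pi/2\beta)^{n/2}$, hence finite. In the second, $e^{2\beta|u|^2}$ is bounded on the ball $\frac{1}{2}D$; since $y(r)\to+\infty$ as $r\to a$, for every $\alpha>1$ the factor $e^{(1-\alpha)y(2|u|)}$ vanishes at the boundary of $\frac{1}{2}D$, keeping the integrand bounded and hence integrable. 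Therefore $s\in\mathcal H_\alpha\setminus\{0\}$ whenever $\alpha>1$, which is all that Theorem \ref{main2} requires.

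The main subtlety is the choice of section: a constant function cannot belong to $\mathcal H_\alpha$ because $F$ is independent of the imaginary direction $v$, and the weight $e^{-\alpha F}$ therefore cannot offset the infinite Lebesgue volume of the tubular factor $i\R^n$. The Gaussian $e^{\beta\sum z_j^2}$ is chosen precisely because its modulus squared decays as $e^{-2\beta|v|^2}$ in the imaginary direction while remaining bounded on the compact base $\overline{\frac{1}{2}D}$. Once this trick is in place, the Monge-Amp\`ere identity $\det(g_{i\bar j})=e^F$ — essentially the K\"ahler--Einstein equation with $\lambda=-1$ — does the bookkeeping: the weight reduces to $e^{(1-\alpha)F}$ so that the single condition $\alpha>1$ is enough to tame the boundary blow-up of $F$.
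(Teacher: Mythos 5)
Your proof is correct and follows essentially the same strategy as the paper: use the Monge--Amp\`ere identity $\det(f_{jk})=e^{f}$ to write the volume form as $e^{F}$ times Lebesgue measure, then exhibit an explicit holomorphic function whose modulus decays in the unbounded imaginary directions so that the weighted $L^2$ norm is finite for $\alpha>1$. The only difference is cosmetic: you use the Gaussian $e^{\beta\sum z_j^2}$ where the paper uses the rational function $\prod_{j=1}^n(z_j-2a)^{-1}$, and both choices make the integral factor into a convergent integral over $i\mathds R^n$ times a bounded integral over the ball.
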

\begin{proof}
First, notice that the volume form of $g_n$ is given by
    \[
    \frac{\omega_n^n}{n!}=\det(H)\frac{\omega_0^n}{n!}=e^{f}\frac{\omega_0^n}{n!}
    \]
    where $H=\left(\frac{\partial^2 f}{\partial x_i\partial x_j}\right)$ is the hessian of the K\"ahler potential $f$, $\omega_0$ is the standard euclidean form of $\C^n$, and the second equality follows since by construction we have $\det(f_{jk})=e^{f}$ (see \cite{calabi} p.19). Consider now the holomorphic function
\[
h(z)=\prod_{j=1}^n \frac{1}{z_j-2a}
\]
over $M_n=\frac{1}{2}D\oplus i\R^n$, where $a$ is the radius of the ball $D\subset\R^n$. For $j=1,\dots, n$, denote $x_j:=2{\rm Re}(z_j)$ (as before) and $u_j:=2{\rm Im}(z_j)$. Observe that in this notation,
$$
 \frac{\omega_0^n}{n!}=\left(\frac{i}{2}\right)^n \prod_{j=1}^n dz_j\wedge d\overline{z_j}=\frac{1}{2^{2n}}\prod_{j=1}^n dx_j\wedge du_j.
$$
We have
\[
|h(z)|^2=\prod_{j=1}^n \frac{1}{z_j-2a}\frac{1}{\overline{z_j}-2a}=\prod_{j=1}^n \frac{1}{\frac{x_j^2}{4}+\frac{u_j^2}{4}-2ax_j+4a^2}.
\]
Thus, 
\begin{equation*}
\begin{split}
\int_{M_n} e^{-\alpha f}|h|^2 \frac{\omega_n^n}{n!}&=\int_{M_n} e^{(1-\alpha)f}|h|^2\frac{\omega_0^n}{n!}
=\int_{\frac{1}{2}D}\int_{\R^n} e^{(1-\alpha)f}|h|^2\frac{\omega_0^n}{n!}\\
&=\frac{1}{2^{2n}}\int_{\frac{1}{2}D}\int_{\R^n} e^{(1-\alpha)f}\prod_{j=1}^n \frac{1}{\frac{x_j^2}{4}+\frac{y_j^2}{4}-2ax_j+4a^2}\prod_{j=1}^n dx_j\wedge dy_j\\
&=\frac{1}{2^{2n}}\int_{\frac{1}{2}D}e^{(1-\alpha)f}\prod_{j=1}^n dx_j\int_{-\infty}^{+\infty}\cdots\int_{-\infty}^{+\infty}\frac{1}{\frac{x_j^2}{4}+\frac{y_j^2}{4}-2ax_j+4a^2}\prod_{j=1}^n dy_j\\
&=\pi^n\int_{\frac{1}{2}D}\frac{e^{(1-\alpha)f}}{\prod_{j=1}^n|x_j-4a|}\prod_{j=1}^n dx_j.
\end{split}
\end{equation*}
Now, the function $\frac{1}{\prod_{j=1}^n|x_j-4a|}$ is bounded on $\frac{1}{2}D$ since $D$ has radius $a$  and similarly $e^{(1-\alpha)f}$ is bounded since $f$ is a smooth function diverging at $+\infty$ on the boundary of $D$, i.e. $e^{(1-\alpha)f}\to 0$ as $p\to\partial D$, for $\alpha>1$.
\end{proof}

By Theorem \ref{th: mamarinescu}, this proves the first part of Theorem \ref{main2}. To show the second part, namely that the $a_3$ coefficient of the TYCZ-expansion of the $\varepsilon$-function of $(M_n,g_n)$ is not zero, we start by describing more in details Calabi's metric for $n=2$. By definition, $g_2$ is given by: 
\[
(g_2)_{i\overline{j}}=
\frac{\partial^2 y(r)}{\partial x_i\partial{x_j}}=\frac{y'}r \delta_{ij}+\left(y''-\frac{y'}r\right)\frac{x_i x_j}{r^2},
\]
while its inverse is described by 
$$
g^{-1}_{i\overline{j}}=\frac{r}{y'}\delta_{ij}+\left(\frac{1}{y''}-\frac{r}{y'}\right)\frac{x_i x_j}{r^2},
$$
that is, the matrix representing $g_2$ is
\begin{equation}\label{G2}
G_2=\begin{bmatrix}\frac{y'}r+(y''-\frac{y'}r)\frac{x_1^2}{r^2}&(y''-\frac{y'}r)\frac{x_1x_2}{r^2}\\
(y''-\frac{y'}r)\frac{x_1x_2}{r^2}&\frac{y'}r+(y''-\frac{y'}r)\frac{x_2^2}{r^2}\end{bmatrix}
\end{equation}
while its inverse is
\begin{equation}\label{G2inv}
G_2^{-1}=
\begin{bmatrix}
    \frac{r}{y'}+(\frac{1}{y''}-\frac{r}{y'})\frac{x_1^2}{r^2}&(\frac{1}{y''}-\frac{r}{y'})\frac{x_1 x_2}{r^2}\\
    (\frac{1}{y''}-\frac{r}{y'})\frac{x_1 x_2}{r^2}&\frac{r}{y'}+(\frac{1}{y''}-\frac{r}{y'})\frac{x_2^2}{r^2}
\end{bmatrix}.
\end{equation}

Let us prove Theorem \ref{main2}.
\begin{proof}[Proof of Theorem \ref{main2}]
   Calabi's metric is K\"ahler--Einstein with Einstein constant $-1$, thus by Theorem \ref{main1} $a_3$ is different from zero for all $n\geq 3$. Set $n=2$.

   We divide the proof in 5 steps. In Step 1, Step 2 and Step 3 we prove that the vanishing of $a_3$ is equivalent to $y'\partial_r |R|^2=0$, that is, since $y'(r)> 0$ for any $r>0$, it is equivalent to $\partial_r |R|^2=0$ for $r\in (0,a)$. 
To conclude the proof we show in Step 4 and Step 5 that $|R|^2$ cannot be constant in $(0,a)$ since $\lim_{r\rightarrow 0}|R|^2\neq \lim_{r\rightarrow a}|R|^2$.
\begin{enumerate}
   \item[{\bf Step 1:}]  {\bf $ \Delta|R|^2=\frac{1}{y''}\partial^2_r|R|^2+\frac{1}{y'}\partial_r|R|^2$.}\\
    
 A straighforward computation using the formula for the Riemannian tensor for $g_n$ computed by Calabi (see \cite{calabi}, p. 23), we get:
\begin{equation}\label{riemanng2}
\frac12|R|^2=2-\frac{8re^y}{y'^3}+\frac{y'^{4}}{r
^{4}e^{2y}}+\frac 
{4}{ry'}+\frac {12{r}^{
2}e^{2y}}{ y'^{6}}-\frac{2
y'}{r^3e^y}-\frac{12e^y}{y'^{4}}+
\frac{6y'^{3}}{{r}^{5} e^{2y}}-\frac {12}{{r}^{4}e^y}+
\frac {12y'^
{2}}{{r}^{6} e^{2y}}.
\end{equation}
In particular, $|R|^2$ is a function of $r$, and by \eqref{laplace} we have:
        \begin{equation*}
\begin{split}
\Delta|R|^2=&\,g^{11}\frac{\partial^2}{\partial z_1\partial\overline{z}_1}|R|^2+g^{12}\frac{\partial^2}{\partial z_1\partial\overline{z}_2}|R|^2+g^{21}\frac{\partial^2}{\partial z_2\partial\overline{z}_1}|R|^2+g^{22}\frac{\partial^2}{\partial z_2\partial\overline{z}_2}|R|^2=\\
=&\,g^{11}\frac{\partial^2}{\partial x_1^2}|R|^2+2g^{12}\frac{\partial^2}{\partial x_1\partial x_2}|R|^2+g^{22}\frac{\partial^2}{\partial x_2^2}|R|^2\\
=&\,g^{11}\frac{\partial}{\partial x_1}\left(\partial_r|R|^2 \frac{x_1}{r}\right)+2g^{12}\frac{\partial}{\partial x_1}\left(\partial_r|R|^2\frac{x_2}r\right)+g^{22}\frac{\partial}{\partial x_2}\left(\partial_r|R|^2\frac{x_2}r\right)\\
=&\,g^{11}\left(\partial^2_r|R|^2 \frac{x_1^2}{r^2}+\frac1r\partial_r|R|^2\left(1-\frac{x_1^2}{r^2}\right)\right)+2g^{12}\left(\partial^2_r|R|^2 \frac{x_1x_2}{r^2}-\frac1r\partial_r|R|^2\frac{x_1x_2}{r^2}\right)+\\
&+g^{22}\left(\partial^2_r|R|^2 \frac{x_2^2}{r^2}+\frac1r\partial_r|R|^2\left(1-\frac{x_2^2}{r^2}\right)\right)\\
=&\,\partial^2_r|R|^2 \left(g^{11} \frac{x_1^2}{r^2}+2g^{12}\frac{x_1x_2}{r^2}+g^{22}\frac{x_2^2}{r^2}\right)+\frac1r\partial_r|R|^2\left(g^{11}+g^{22}-\left(g^{11} \frac{x_1^2}{r^2}+2g^{12}\frac{x_1x_2}{r^2}+g^{22}\frac{x_2^2}{r^2}\right)\right)\\
=&\frac{1}{y''}\partial^2_r|R|^2+\frac{1}{y'}\partial_r|R|^2.
\end{split}
\end{equation*}
where $g_{jk}$ and $g^{jk}$, $j$, $k=1,2$, represent respectively the entries of $G_2$ and $G_2^{-1}$, given in \eqref{G2} and \eqref{G2inv}.
\item[{\bf Step 2:}]  {\bf $\Delta|R|^2=0$ if and only if $y'\partial_r|R|^2$ is constant.}\\

By Step 1, $\Delta|R|^2=0$ if and only if:
$$
\frac{y''}{y'}=-\frac{\partial^2_r|R|^2}{{\partial_r|R|^2}}
$$
that is:
$$
\partial_r\left(\log(y')\right)=-\partial_r\left(\log(\partial_r|R|^2)\right),
$$
and thus for a constant $c$:
$$
\log(\partial_r|R|^2)=-\log(y')+c,
$$
that is:
$$
\partial_r|R|^2=\frac{c}{y'}.
$$

\item[{\bf Step 3:}] 
{\bf $\lim_{r\rightarrow a}y'\partial_r|R|^2=0$.}\\ 
   
From \eqref{riemanng2} it follows:
\begin{equation}
    \begin{split}
       \frac14\partial_r|R|^2=&-\frac{4re^y}{y'^2}+\frac{24r^2e^{2y}}{y'^5}-\frac{36r^3e^{3y}}{y'^8}-\frac{8y'^4}{r^5e^{2y}}-\frac{y'^5}{r^4e^{2y}}+\frac{3y'^2}{r^3e^y} -\frac{3}{r^2y'}+\\
       &+\frac{18y'}{r^4e^y}-\frac{27y'^3}{r^6e^{2y}}+\frac{36}{r^5e^y}-\frac{36y'^2}{r^7e^{2y}}+\frac{36re^{2y}}{y'^6}-\frac{12e^y}{y'^3}.
    \end{split}
\end{equation}
Let us write:
$$
\frac14y'\partial_r|R|^2=A+B+C,
$$
where:
\begin{equation*}
\begin{split}
A&:=-\frac{8y'^5}{r^5e^{2y}}+\frac{18y'^2}{r^4e^y}-\frac{27y'^4}{r^6 e^{2y}}+\frac{36y'}{r^5e^y}-\frac{36y'^3}{r^7e^{2y}},\\
B&:=-\frac{y'^6}{r^4e^{2y}}+\frac{3y'^3}{r^3e^y} -\frac{3}{r^2},\\
C&:=-\frac{4re^y}{y'}+\frac{24r^2e^{2y}}{y'^4}-\frac{36r^3e^{3y}}{y'^7}+\frac{36re^{2y}}{y'^5}-\frac{12e^y}{y'^2}.
\end{split}
\end{equation*}
In order to compute the limits for ${r\rightarrow a}$ of $A$, $B$ and $C$, we first observe that by construction (see \cite[p. 21]{calabi}):
$$
\lim_{r\rightarrow a}y=+\infty,\quad \lim_{r\rightarrow a}y'=+\infty,
$$
and by \eqref{cauchy}:
\begin{equation}\label{firstlimit}
 \lim_{r\rightarrow a}y''=\lim_{r\rightarrow a}\frac{e^{y}r}{y'}=\lim_{r\rightarrow a}\frac{(1+ry')y'}{r}=+\infty,
\end{equation}
\begin{equation}\label{secondlimit}
\lim_{r\rightarrow a}\frac{e^y}{y'^3}=\lim_{r\rightarrow a}\frac{e^yy'}{3y'^2y''}=\lim_{r\rightarrow a}\frac{1}{3r}=\frac{1}{3a},
\end{equation}
Further, $\lim_{r\rightarrow a}(y'^3-3re^y)$ is not finite,
in fact
\begin{equation*}
    \lim_{r\rightarrow a}(y'^3-3re^y)=\lim_{r\rightarrow a}y'^3\left(1-\frac{3re^y}{y'^3}\right)=\lim_{r\rightarrow a}\frac{1-\frac{3re^y}{y'^3}}{\frac{1}{y'^3}}=\frac{0}{0}
\end{equation*}
and applying de l'Hopital we get:
\begin{equation*}
    \lim_{r\rightarrow a}\frac{1-\frac{3re^y}{y'^3}}{\frac{1}{y'^3}}=\lim_{r\rightarrow a}\frac{\frac{-y'^3(3e^y+3re^y y')+9y'^2y''re^y}{y'^6}}{-\frac{3y'^2y''}{y'^6}}=\frac{y'^2}{r}+y'^3-3re^y.
\end{equation*}
Thus, if $\lim_{r\rightarrow a}\left(y'^3-3re^y\right)=c$, with 
$c\in\R$, then we get the contradiction:
\[
\lim_{r\rightarrow a}\left(y'^3-3re^y\right)=\lim_{r\rightarrow a}\left(\frac{y'^2}{r}+y'^3-3re^y\right)=+\infty.
\]
Therefore we can apply de l'Hopital to $\lim_{r\rightarrow a}\frac{y'^3-3re^{y}}{y'^2}$ and we get:
\begin{equation}\label{thirdlimit}
\lim_{r\rightarrow a}\frac{y'^3-3re^{y}}{y'^2}=\lim_{r\rightarrow a}\frac{3y'^2y''-3e^{y}-3re^yy'}{2y'y''}=\lim_{r\rightarrow a}\frac{3y're^y-3e^{y}-3re^yy'}{2re^y}=-\frac3{2a}.
\end{equation}
By \eqref{firstlimit} and \eqref{secondlimit} we have:
\begin{equation*}
    \lim_{r\rightarrow a}A=\lim_{r\rightarrow a}\left(-\frac{8y'^5}{r^5e^{2y}}+\frac{18y'^2}{r^4e^y}-\frac{27y'^4}{r^6e^{2y}}+\frac{36y'}{r^5e^y}-\frac{36y'^3}{r^7e^{2y}}\right)=0,
\end{equation*}
\begin{equation*}
    \lim_{r\rightarrow a}B=\lim_{r\rightarrow a}\left(-\frac{y'^6}{r^4e^{2y}}+\frac{3y'^3}{r^3e^y} -\frac{3}{r^2}\right)=-\frac3{a^2},
\end{equation*}
Finally by \eqref{secondlimit} and \eqref{thirdlimit} we have:
\begin{equation*}
\begin{split}
\lim_{r\rightarrow a}C&=\lim_{r\rightarrow a}\left(
-\frac{4re^y}{y'}+\frac{24r^2e^{2y}}{y'^4}-\frac{36r^3e^{3y}}{y'^7}+\frac{36re^{2y}}{y'^5}-\frac{12e^y}{y'^2}
\right)\\
&=\lim_{r\rightarrow a}\left[-y'^2\left(
4-\frac{24re^{y}}{y'^3}+\frac{36r^2e^{2y}}{y'^6}\right)+12\left(\frac{3e^{y}}{y'^3}-\frac{1}{r}\right)y'
\right]\frac{re^y}{y'^3}\\
&=\lim_{r\rightarrow a}\left[-\left(
\frac{y'^3-3re^{y}}{y'^2}\right)^2-\frac3r\frac{y'^3-3re^{y}}{y'^2}
\right]\frac{4re^y}{y'^3}\\
&=\left[-\frac9{4a^2}+\frac9{2a^2}
\right]\frac{4}{3}=\frac3{a^2},
\end{split}
\end{equation*}
and we are done.
\item[{\bf Step 4:}] {\bf $\lim_{r\rightarrow a}|R|^2=\frac43$.}\\
This follows directly from \eqref{riemanng2}.

\item[{\bf Step 5:}] {\bf $\lim_{r\rightarrow 0}|R|^2=\frac{3}{2}$.}\\

Using \eqref{riemanng2}, let us write:

\begin{equation*}
    \begin{split}
\frac12|R|^2=&2-\frac{8re^y}{y'^3}+\frac{y'^{4}}{r
^{4}e^{2y}}+\frac 
{4}{ry'}+\frac {12{r}^{
2}e^{2y}}{ y'^{6}}-\frac{2
y'}{r^3e^y}-\frac{12e^y}{y'^{4}}+
\frac{6y'^{3}}{{r}^{5} e^{2y}}-\frac {12}{{r}^{4}e^y}+
\frac {12y'^
{2}}{{r}^{6} e^{2y}}\\
=&2+\frac{y'^{4}}{r
^{4}e^{2y}}+\frac{r^3}{y'^3}\left(-8e^{3y}+4\frac{y'^2e^{2y}}{r^2}-\frac{2
y'^4e^y}{r^4}+
\frac{6y'^{6}}{{r}^{6}}\right)\frac1{r^2}\frac{1}{ e^{2y}}+\\
&12\frac{r^6}{y'^6}\left(e^{4y}-\frac{e^{3y}y'^2}{r^2}-\frac{e^yy'^6}{r^6}+
\frac {y'^
{8}}{{r}^{8}}\right)\frac{1}{r^4}\frac1{e^{2y}}.
  \end{split}
\end{equation*}
 Since 
 \[
 \lim_{r\rightarrow 0}\frac{y'^4}{r^4 e^{2y}}=1,
 \]
we are done by showing that:
 \begin{equation}\label{limite1}
        \lim_{r\rightarrow 0}\frac{r^3}{y'^3}\left(-8e^{3y}+4\frac{y'^2e^{2y}}{r^2}-\frac{2
y'^4e^y}{r^4}+
\frac{6y'^{6}}{{r}^{6}}\right)\frac1{r^2}\frac{1}{ e^{2y}}=-\frac92,
\end{equation} 
 \begin{equation}\label{limite2}
\lim_{r\rightarrow 0}\frac{r^6}{y'^6}\left(e^{4y}-\frac{e^{3y}y'^2}{r^2}-\frac{e^yy'^6}{r^6}+
\frac {y'^
{8}}{{r}^{8}}\right)\frac{1}{r^4}\frac1{e^{2y}}=\frac3{16}.
 \end{equation} 
 By \eqref{espansione y} and \eqref{espansione y1} we have:
\begin{equation}\label{limit0}
    \frac{y'(r)}{r}=e^{y(0)/2}+\sum_{j=1}^\infty c_{2j}r^{2j},
\end{equation}
with $c_2=\frac18e^{y(0)}$ and $c_4=\frac{7}{384}e^{3y(0)/2}$.
For shorten the notation let us write:
\begin{equation}\label{pqs}
\begin{split}
     P:=&\frac{y'}{r}=e^{y(0)/2}+\sum_{j=1}^\infty c_{2j}r^{2j},\\
        Q:=&\frac{P'}{r}=2c_2+\sum_{j=2}^\infty 2j c_{2j} r^{2j-2},\\
        S:=&\frac{Q'}{r}=8c_{4}+\sum_{j=3}^\infty 2j(2j-2) c_{2j} r^{2j-4}
\end{split}
    \end{equation}
To compute \eqref{limite1} let us start with the following:
\begin{equation*}
    \begin{split}
\lim_{r\rightarrow 0}&\left(-8e^{3y}+4\frac{y'^2e^{2y}}{r^2}-\frac{2
y'^4e^y}{r^4}+
\frac{6y'^{6}}{{r}^{6}}\right)\frac1{r^2}=\lim_{r\rightarrow 0}\frac{-8e^{3y}+4e^{2y}P^2-2e^y
P^4+
6P^{6}}{r^2}\\
=&\lim_{r\rightarrow 0}\left(\frac{-24e^{3y}y'+8e^{2y}y'P^2+8e^{2y}PP'-8e^y
P^3P'-2e^yy'P^4+
36P^{5}P'}{2r}\right)\\
=&\frac12\lim_{r\rightarrow 0}\left(-24e^{3y}P+8e^{2y}P^3+8e^{2y}PQ-8e^y
P^3Q-2e^yP^5+
36P^{5}Q\right)\\
=&-\frac{9}2e^{7y(0)/2},
 \end{split}
\end{equation*}
where we applied de l'Hopital and used \eqref{pqs}.
Plugging the result into \eqref{limite1} we get:
\[
\lim_{r\rightarrow 0}\frac{r^3}{y'^3}\left(-8e^{3y}+4\frac{y'^2e^{2y}}{r^2}-\frac{2
y'^4e^y}{r^4}+
\frac{6y'^{6}}{{r}^{6}}\right)\frac1{r^2}\frac{1}{ e^{2y}}=-\frac{9}{2}.
\]
To compute \eqref{limite2}, we need to apply de l'Hopital twice to the following limit:
\begin{equation*}
    \begin{split}
    \lim_{r\rightarrow 0}&\left(e^{4y}-\frac{e^{3y}y'^2}{r^2}-\frac{e^yy'^6}{r^6}+
\frac {y'^
{8}}{{r}^{8}}\right)\frac1{r^4}=\lim_{r\rightarrow 0}\frac{e^{4y}-e^{3y}P^2-e^yP^6+
P^
{8}}{r^4}\\
=&\, \lim_{r\rightarrow 0}\left(\frac{4e^{4y}y'-3e^{3y}y'P^2-2e^{3y}PP'-e^yy'P^6-6e^yP^5P'+8P^7P'}{4r^3}\right)\\
=&\, \frac14\lim_{r\rightarrow 0}\frac1{r^2}\left(4e^{4y}P-3e^{3y}P^3-2e^{3y}PQ-e^yP^7-6e^yP^5Q+8P^7Q\right)\\
=&\, \frac18\lim_{r\rightarrow 0}\frac1{r}\left(16y'e^{4y}P+4e^{4y}P'-9y'e^{3y}P^3-9e^{3y}P^2P'-6y'e^{3y}PQ-2e^{3y}P'Q-2e^{3y}PQ'-y'e^yP^7+\right.\\
&\qquad\quad\left.-7e^yP^6P'-6y'e^yP^5Q-30e^yP^4P'Q-6e^yP^5Q'+56P^6P'Q+8P^7Q'\right)\\
=&\, \frac18\lim_{r\rightarrow 0}\left(16e^{4y}P^2+4e^{4y}Q-9e^{3y}P^4-9e^{3y}P^2Q-6e^{3y}P^2Q-2e^{3y}Q^2-2e^{3y}PS-e^yP^8+\right.\\
&\qquad\quad\left.-7e^yP^6Q-6e^yP^6Q-30e^yP^4Q^2-6e^yP^5S+56P^6Q^2+8P^7S\right)\\
=&\,\frac{3}{16}e^{5y(0)},
    \end{split}
\end{equation*}
thus
\[
\lim_{r\rightarrow 0}\frac{r^6}{y'^6}\left(e^{4y}-\frac{e^{3y}y'^2}{r^2}-\frac{e^yy'^6}{r^6}+
\frac {y'^
{8}}{{r}^{8}}\right)\frac{1}{r^4}\frac1{e^{2y}}=\frac3{16},
\]
concluding the proof.
\end{enumerate}
\end{proof}

\end{document}